\newcommand{\dbar}{\bar{\partial}}
\newcommand{\C}{\mathbb{C}}
\newcommand{\R}{\mathbb{R}}
\theoremstyle{plain}
\newtheorem{theorem}{Theorem}[section]
\newtheorem{proposition}[theorem]{Proposition}
\newtheorem{lemma}[theorem]{Lemma}
\newtheorem{Corollary}[theorem]{Corollary}
\theoremstyle{definition}
\newtheorem{definition}[theorem]{Definition}
\newtheorem{remark}[theorem]{Remark}
\newcounter{constantLABEL}
\newcommand{\cref}[1]{C_{\ref{#1}}}
\newcounter{constantslabel}
\begin{document}

%%%%%%%%%%%%%%%%%%%%%%%%%%%%%%%%%%%%%%%%%%%%%%%%%%%%%%%%%%%%%%%%%%%%%%%%%%%%%%%%%%%%%%%%%%%%%%%%%%%%%

\title{On the moduli space of Donaldson--Thomas instantons} 

%%%%%%%%%%%%%%%%%%%%%%%%%%%%%%%%%%%%%%%%%%%%%%%%%%%%%%%%%%%%%%%%%%%%%%%%%%%%%%%%%%%%%%%%%%%%%%%%%%%%%

\author{Yuuji Tanaka}

\date{}

%%%%%%%%%%%%%%%%%%%%%%%%%%%%%

\maketitle

%%%%%%%%%%%%%%%%%%%%%%%%%%%%%%%%%%%%%%%%%%%%%%%%%%%%%%%%%%%%%%%%%%%%%%%%%%%%%%%%%%%%%%%%%%%%%%%%%%%%%

\begin{abstract}

In alignment with a programme by Donaldson and Thomas \cite{DoTh98}, 
Thomas \cite{Thomas00} 
constructed 
a deformation invariant for smooth projective 
Calabi--Yau threefolds,  which is now called 
the Donaldson--Thomas invariant, 
from the moduli space of (semi-)stable
sheaves by using algebraic geometry techniques.

In the same paper \cite{Thomas00}, Thomas noted that certain perturbed
 Hermitian--Einstein equations might possibly 
produce an analytic theory of
the invariant. 
This article sets up the equations on symplectic 6-manifolds, 
and gives the local model and structures of the moduli 
space coming from the equations. 
We then describe a Hitchin--Kobayashi style correspondence for the equations 
on compact K\"{a}hler threefolds, which turns out to be a special case of results by 
\'{A}lvarez-C\'{o}nsul and Garc\'{\i}a-Prada \cite{AG}.

\end{abstract}

%%%%%%%%%%%%%%%%%%%%%%%%%%%%%%%%%%%%%%%%%%%%%%%%%%%%%%%%%%%%%%%%%%%%%%%%%%%%%%%%%%%%%%%%%%%%%%%%%%%%%%

\markboth{}
{}

\footnote[0]{\textit{AMS 2010 Mathematics Subject Classification}: 53C07. 
\textit{Key words}: gauge theory; the Donaldson--Thomas theory.}

\section{Introduction}

In \cite{DoTh98},  Donaldson and Thomas suggested 
higher-dimensional
analogues of gauge theories, 
and proposed the following two directions:  
gauge theories on $Spin(7)$ and $G_2$-manifolds; 
and gauge theories in complex 3 and 4 dimensions.
The first ones could be related to ``Topological M-theory'' proposed by 
Nekrasov and others   
\cite{MR2121727}, \cite{Dijkgraaf:2004te}.  
The second ones are a ``complexification'' of the lower-dimensional gauge
theories.  
In this direction, Thomas \cite{Thomas00} 
constructed a deformation invariant of smooth projective 
Calabi--Yau threefolds from the moduli space of (semi-)stable sheaves, 
which he called the {\it holomorphic Casson
invariant} because it can be viewed as a complex analogue of
the Taubes--Casson invariant \cite{MR1037415}. 
It is now called the
Donaldson--Thomas invariant (D--T invariant for short),  
and further developed by Joyce--Song
\cite{JS} and Kontsevich--Soibelman \cite{KS}, \cite{KS2}, \cite{KS3}. 
Later, Donaldson and Segal \cite{DS0902} further promoted the programme, 
taking into account the progress made after the proposal.  
Recently, more breakthroughs concerning the ``categorification'' of the
D--T invariant by using perverse sheaves were made by a group led by Joyce 
\cite{GJ1}, \cite{GJ2}, \cite{GJ3}, \cite{GJ4}, \cite{BBBJ}, 
also by Kiem--Li \cite{KL}.

Let us mention here a conjecture (called the MNOP
conjecture) posed by 
 Maulik--Nekrasov--Okounkov--Pandharipande \cite{MNOP1}, \cite{MNOP2}, 
which insists that the  rank one D--T invariants (``counting'' of ideal
sheaves on a Calabi--Yau threefold) can be 
determined by only 
the Betti numbers and the Gromov--Witten invariants. 
Assuming the conjecture is
true, 
one can observe that 
the rank one D--T invariants are symplectic invariants, 
as the Gromov-Witten invariants are symplectic invariants.  
One might further speculate that the full D--T
invariants defined by Joyce and Song could be also symplectic
invariants. 
One of our goals is 
to work toward proving this by 
using a gauge-theoretic equation (we call it the {\it Donaldson--Thomas
equation}) on a compact symplectic 6-manifold,
which ought to be an analytic counterpart of the notion of stable holomorphic vector
bundles, as the problem is analytic in nature.

Perhaps, one might think of that a gauge-theoretic equation which
would describe the D--T 
invariant could be the Hermitian--Einstein equations,  
as the Hitchin--Kobayashi correspondence \cite{D2}, \cite{D3}, \cite{UY1}, \cite{UY2} 
(see also \cite{MR909698}, \cite{LT}) insists that  
there is a one-to-one correspondence between 
the existence of the Hermitian--Einstein
connection  
and the Mumford--Takemoto stability 
of an irreducible vector bundle over a compact K\"{a}hler
manifold. 
However, the Hermitian--Einstein equations do not form an elliptic system
even with a gauge fixing equation in complex dimension three and more (see Section
2.1), so
this might cause a little problem.

In order to work out this issue, 
Donaldson and Thomas \cite{Thomas00} 
suggested a perturbation of the Hermitian--Einstein 
equations described below. 
This perturbation was also brought in by Baulieu--Kanno--Singer
\cite{BaKaSi98} and Iqbal--Nekrasov--Okounkov--Vafa \cite{IqNeOkVa03} in
String Theory context.

Let $Z$ be a compact symplectic 6-manifold with symplectic form
$\omega$, 
$P$ a principal
 $U(r)$-bundle on $Z$, and $E$ the associated unitary vector bundle on
 $Z$. 
The equations we consider are ones for a 
 connection $A$ of $P$ and an $\text{Ad} (P)$-valued (0,3)-form $u$ on $Z$
 of the following form. 
\begin{equation*}
 F_{A}^{0,2} + \bar{\partial}_{A}^{*} u = 0 , \qquad 
 F_{A}^{1,1} \wedge \omega^2 + [ u , \bar{u} ] +  
 2 \pi i \mu (E)  Id_{E} \omega^3 =0, 
\label{DTeq}
\end{equation*} 
where $F_{A}^{0,2}$ and $F_{A}^{1,1}$ are the (0,2) and (1,1) components
 of the curvature $F_{A}$ of $A$, and $\mu(E) := \frac{1}{r} \int_{Z} c_1 (E) \wedge \omega^2$. 
Here we picked up an almost complex structure compatible with $\omega$
 to get the splitting of the space of the complexified two forms. 
We call the equations the {\it Donaldson--Thomas equations} ({\it D--T
equations} for short) and a solution to the equations a {\it Donaldson--Thomas
 instanton} ({\it D--T instanton} for short). 
These equations with a gauge fixing equation form an elliptic system. 
We aim at developing an analytic theory concerning the D--T invariant by
 using the moduli space coming from these equations.

In \cite{Tan3}, \cite{Tan4}, we studied some analytic properties of
solutions to the equations on compact K\"{a}hler threefolds. 
In \cite{Tan3}, we proved that a sequence of solutions to the 
D--T equation has a subsequence which smoothly converges 
to a solution to the D--T equation outside a closed 
subset of the Hausdorff dimension two. 
In \cite{Tan4}, we proved 
some of singularities which appeared in the above weak limit can be
removed.

In this article, we describe the infinitesimal deformation 
and the Kuranishi model of the moduli space of D--T 
instantons by using familiar techniques in gauge theory,  
for example, the corresponding results 
for the anti-self-dual instantons in real four
dimensions were studied by Atiyah--Hitchin--Singer \cite{MR506229} 
(see also \cite{FrUh91}, \cite{DK}),  
and for the Hermitian--Einstein connections by  Kim  
\cite{MR888136} (see also \cite{MR909698}, \cite{LT}). 
We then describe 
a Hitchin--Kobayashi style correspondence for 
the D--T instanton on compact K\"{a}hler threefolds, 
which turns out to be a special case of results by 
\'{A}lvarez-C\'{o}nsul and Garc\'{\i}a-Prada \cite{AG}.

The organisation of this article is as follows. 
In Section \ref{sec:DTinst}, 
we briefly recall the Hermitian--Einstein 
connections,   
subsequently,  
we introduce the D--T equations on symplectic 6-manifolds. 
We also mention a relation between 
the D--T equations and the complex anti-self-dual
equations by dimensional reduction argument. 
In Section \ref{sec:KDT}, we give the Kuranishi model of the space of the 
D--T instantons. 
In Section \ref{sec:HKDT}, we describe a Hitchin--Kobayashi style correspondence for 
the D--T instanton on compact K\"{a}hler threefolds.

\paragraph{Acknowledgements.}
I would like to thank Mikio Furuta, 
Ryushi Goto, Ryoichi Kobayashi, Hiroshi Ohta for 
valuable comments, and referees for many useful advice. 
I am also grateful to Katrin Wehrheim 
for  wonderful encouragement. 
A part of this article was written when I had visited Beijing
International Center for Mathematical Research, Peking University  
in 2008--2009, I am very grateful to Gang Tian and the institute for
their support and hospitality.  
A part of revision was made during my visit to Institut des Hautes \'{E}tudes Scientifiques in February to March of 2012. 
I would like to thank the institute for the support and giving me an excellent research environment.  
Last but not least, I would like to thank Dominic
Joyce for enlightening me on these subjects over the years. 
This work was partially supported by JSPS Grant-in-Aid for Scientific Research No. 15H02054.

\section{The Donaldson--Thomas instantons}
\label{sec:DTinst}

\subsection{The Hermitian--Einstein connections on
compact K\"{a}hler manifolds}

We first recall the notion of the Hermitian--Einstein connections on
compact K\"{a}hler manifolds. 
General references for the Hermitian--Einstein connections are 
\cite{MR909698} and \cite{LT}.

Let $X$ be a compact K\"{a}hler manifold of complex dimension $n$ with
K\"{a}hler form $\omega$, $E$ a hermitian vector bundle over $X$ with
hermitian metric $h$. 
A metric preserving connection $A$ of $E$ is said to be a 
\textit{Hermitian--Einstein connection} if $A$ satisfies the following
equations. 
\begin{gather}
 F_{A}^{0,2} =0,  \quad 
i \Lambda F_{A}^{1 ,1}   = 2 n \pi \mu(E) Id_{E}  ,
\label{HE2}
\end{gather}
where $F_{A}^{0,2}$ and $F_{A}^{1,1}$ are the (0,2) and (1,1) components
 of the curvature $F_{A}$ of $A$, $\Lambda := (\omega)^{*}$, and $\mu (E) := \frac{1}{r} \int_X c_1 (E) \wedge \omega^{n-1}$.

The existence of a solution to the equations \eqref{HE2} is related to the notion of stability for holomorphic vector bundles. 
In fact, Donaldson \cite{D2}, \cite{D3} and Uhlenbeck--Yau \cite{UY1}, \cite{UY2} proved that 
there is a one-to-one correspondence between 
the existence of the Hermitian--Einstein
connection  
and the Mumford--Takemoto stability 
of an irreducible vector bundle over a compact K\"{a}hler
manifold (see also \cite{MR909698}, \cite{LT}).

The infinitesimal deformation of a Hermitian--Einstein connection $A$
was 
studied by Kim \cite{MR888136}  (see also \cite{MR909698}, \cite{Reyes_Carrion98}), 
and it is described by the following.  
\begin{equation}
\begin{split}
 0 \longrightarrow \Omega^{0} ( X, &\mathfrak{u}(E)) \xrightarrow{d_{A}} 
 \Omega^1 ( X, \mathfrak{u}(E)) \xrightarrow{d_{A}^{+}} 
 \Omega^{+} (X,  \mathfrak{u}(E)) \\
  & \qquad \qquad \xrightarrow{\bar{D}_{A}'} 
A^{0,3} ( X, \mathfrak{u} (E)) \xrightarrow{\bar{D}_{A}} 
A^{0,4} ( X, \mathfrak{u} (E))  \\ 
 &\qquad  \qquad \qquad \quad  \xrightarrow{\bar{D}_{A}} 
\cdots \xrightarrow{\bar{D}_{A}} A^{0,n} (  X, \mathfrak{u} (E)) 
  \longrightarrow 0 ,\\
\end{split}
\label{defHE}
\end{equation}
where 
$  A^{0,q} (X ,\mathfrak{u}(E)) 
:= C^{\infty} (\mathfrak{u} (E) \otimes A^{0,q} )$,  
$\mathfrak{u}(E) = \text{End} (E, h)$ is the bundle of 
skew-Hermitian endomorphisms of $E$, 
$A^{0,p}$ is the space of real 
$(0,p)$-forms (see \cite[pp.~32--33]{MR1004008}) 
over $X$, defined by 
$ A^{0,p} \otimes _{\R} \C =\Lambda^{0,p}  \oplus \Lambda^{p,0}$,  
\begin{equation*}
 \begin{split}
 \Omega^{+} (X ,\mathfrak{u}(E)) 
&:= A^{0,2} (X ,\mathfrak{u}(E)) 
  \oplus \Omega^0 (X ,\mathfrak{u}(E)) \omega \\
&= \{ \phi + \bar{\phi} + f \omega \, : \, 
\phi \in \Omega^{0,2} (X ,\mathfrak{u}(E)) ,\, f \in \Omega^{0} (X ,\mathfrak{u}(E))  \} ,
 \end{split}
\end{equation*} 
$\bar{D}_{A} : A^{0,p} (X , \mathfrak{u} (E)) \to  A^{0, p+1} (X , 
\mathfrak{u}(E))$ is defined by $\bar{D}_{A} \alpha = \bar{\partial}_{A} 
\alpha^{0,p} + \partial_{A} \overline{\alpha^{0,p}}$ for $\alpha = \alpha^{0,p} +
\overline{\alpha^{0,p}}$, where $\alpha^{0,p} \in \Omega^{0,p} (X ,
\mathfrak{u} (E))$, and 
$d_{A}^{+} := \pi^{+} \circ d_{A}, 
\bar{D}_{A} ' := \bar{D}_{A} 
\circ 
\pi^{0,2}$,  
where $\pi^{+} , \pi^{0,2}$ are respectively the orthogonal 
projections from $\Omega^2 $ to 
$\Omega^{+} , A^{0,2}$.

Kim proved that \eqref{defHE} is an elliptic complex if $A$ is a
Hermitian--Einstein connection. 
However, it is obviously not the Atiyah--Hitchin--Singer
type complex \cite{MR506229} if $n \geq 3$, since there are additional
terms such as   
$A^{0,3} ( X, \mathfrak{u} (E))$ and so on. 
Hence, the Hermitian--Einstein connections would not work for 
an analytic construction of the Donaldson--Thomas invariant just as it is. 
But, in \cite{Thomas00}, 
Thomas noted a perturbed Hermitian--Einstein equation, 
which basically corresponds to a ``holding'' of the extra 
term $A^{0,3} ( X, \mathfrak{u} (E))$ 
in \eqref{defHE} (we shall see it in Section 3.1), could
possibly work for an analytic definition of the Donaldson--Thomas invariant. 
We introduce that perturbed equation in the next subsection.

\subsection{The Donaldson--Thomas instantons on compact symplectic 6-manifolds}

Let $Z$ be a compact symplectic 6-manifold with 
 symplectic form $\omega$, and 
$E$ a unitary vector bundle of rank $r$ over $Z$.  
We take an almost complex structure on $Z$ compatible with the
 symplectic form $\omega$. 
Then the almost complex structure induces the 
 splitting of the complexified two forms as 
$ \Lambda^2 \otimes \C = \Lambda^{2,0} \oplus 
\Lambda^{0,2} \oplus \Lambda^{1,1}$.  
We consider the following equations for a connection $A$ of $E$, which
 preserves the hermitian structure of $E$, 
 and a $\mathfrak{u} (E)$-valued (0,3)-form $u$ on $Z$. 
\begin{gather}
F_{A}^{0,2} + \bar{\partial}_{A}^{*} u =0, 
\label{DT1}
\\
F_{A}^{1 ,1} \wedge \omega^{2} + [ u , \bar{u}]   
+  2 \pi i \mu(E) Id_{E} \,  
 \omega^3 = 0  ,
\label{DT2}
\end{gather}
where $F_{A}^{0,2}$ and $F_{A}^{1,1}$ are the (0,2) and (1,1) components
 of the curvature $F_{A}$ of $A$, 
and $\mu (E) := \frac{1}{r} \int_{Z} c_{1} (E) \wedge \omega^2$. 
We call these equations \eqref{DT1}, \eqref{DT2} the \textit{Donaldson--Thomas
equations}, and a solution $(A, u)$ to these equations  
a \textit{Donaldson--Thomas
instanton} (\textit{D--T instanton} for short).

One may think of 
these equations as the 
 Hermitian--Einstein equations 
with a perturbation $u$. 
However, we think of $u$ as a Higgs field, namely, a new variable.  
One of advantages of bringing in the new field $u$ is 
that the Donaldson--Thomas equations form an elliptic system after fixing a
gauge transformation, despite the fact that the
Hermitian--Einstein equations 
on compact K\"{a}hler threefolds do not form it in the same way.

These equations \eqref{DT1}, \eqref{DT2} were also studied in physics 
such as in \cite{BaKaSi98}. 
In that context, these equations are interpreted as a bosonic part of 
dimensional reduction equations of the $N=1$ super Yang--Mills equation in
10 dimensions to 6 dimensions (see also \cite{IqNeOkVa03}, \cite{NeOoVa04}).

\paragraph{The equations in the K\"{a}hler case.}

If the almost complex structure is integrable, then we have 
$\bar{\partial}_{A} F_{A}^{0,2} = 0$ by the Bianchi identity. 
Hence $\bar{\partial}_{A} \bar{\partial}_{A}^{*} u = 0 $ by \eqref{DT1},
thus we have $\bar{\partial}_{A}^{*} u = 0$ on compact K\"{a}hler threefolds.  
Therefore, the Donaldson--Thomas equations \eqref{DT1}, \eqref{DT2} becomes 
 \begin{gather*}
\bar{\partial}_{A}^{*} u = 0, \quad 
F_{A}^{0,2} = 0  , 
\label{DT1_k} \\
F_{A}^{1 ,1} \wedge \omega^{2} + [ u , \bar{u}]  
+  2 \pi i \mu(E)  Id_{E} \, \omega^3 =0. 
\label{DT2_k}
\end{gather*}
The above equations could be thought of as a generalisation of the
Hitchin equation on Riemann surfaces \cite{Hit} to K\"{a}hler threefolds
in the same way as the Vafa--Witten equations on K\"{a}hler surfaces as
mentioned in \cite{Tan}. 
In Section \ref{sec:HKDT} to this article, we describe the corresponding Hitchin--Kobayashi correspondence in this
setting, which turns out to be a special case of results by 
\'{A}lvarez-C\'{o}nsul and Garc\'{\i}a-Prada \cite{AG}.

\subsection{The complex ASD and the Donaldson--Thomas instantons}

In this section, we see that the Donaldson--Thomas
equations on Calabi--Yau threefolds 
can be thought of as the dimensional reduction of 
the complex ASD equations on Calabi--Yau fourfolds, 
this was pointed out by Tian \cite{Tian00}, and 
it is analogous to the Hitchin pair \cite{Hit}.

\paragraph{Complex ASD equations on Calabi--Yau fourfolds.}

Let $X$ be a compact Calabi--Yau fourfold with K\"{a}hler form
$\omega$ 
and holomorphic $(4,0)$-form $\theta$. 
We assume the normalization condition  
$ \theta \wedge \bar{\theta} = \frac{16}{4!} \omega^4$  on $\omega$ and
$\theta$.
Let $E$ be a hermitian vector bundle over $X$.  
By using the holomorphic $(4,0)$-form $\theta$, 
we define {\it the complex Hodge operator} 
$ *_{\theta} : \Lambda^{0,2} \to \Lambda^{0,2} $ 
by 
$ \text{tr} ( \phi  \wedge *_{\theta} \psi )
 = \langle \phi , \psi \rangle  \bar{\theta}$ for $ \phi , \psi \in
 \Lambda^{0,2} $. 
Then $*_{\theta}^2 =1$, and the space of 
$(0,2)$-forms further decomposes into  
$ \Lambda^{0,2} = \Lambda^{0,2}_{+} \oplus \Lambda_{-}^{0,2}$,  
where 
$ \Lambda^{0,2}_{+} 
= \{ \phi \in \Lambda^{0,2} \, : \, *_{\theta} \phi = \phi \} , \, 
 \Lambda^{0,2}_{-} 
= \{ \phi \in \Lambda^{0,2} \, : \, *_{\theta} \phi = - \phi \}$. 
Note that the operator $*_{\theta}$ is an anti-holomorphic map, hence 
$\Lambda^{0,2}_{+}$ and $\Lambda^{0,2}_{-}$ are real subspaces of 
$\Lambda^{0,2}$.

We consider the following equations for connections of $E$: 
\begin{gather}
(1 + *_{\theta}) F_{A}^{0,2} = 0 , 
\quad i \Lambda F_{A}^{1,1}  = 8 \pi \mu (E) Id_{E} ,  
\label{casd}
\end{gather}
where $\mu (E) := \frac{1}{r} \int_{X} c_{1} (E) \wedge \omega^3$. 
We call these equations \textit{complex
ASD equations}, and a solution to these equations a {\it complex ASD
instanton}. 
These were brought in by Donaldson and  Thomas in \cite{DoTh98}. 
These equations with a gauge fixing equation form an elliptic system. 
Analytic properties of the complex ASD instantons were studied by Tian 
\cite{Tian00}.

Note that the complex ASD instantons are special cases of
$Spin(7)$-instantons on $Spin(7)$-manifolds (see \cite[\S~3.1]{Tan2}).

More recently, Donaldson--Thomas style invariants for Calabi--Yau
fourfolds, which concerns the moduli space of the solutions to the
above complex ASD equations, were defined by Borisov--Joyce \cite{BJ}, 
Cao \cite{C} and Cao--Leung
\cite{CL1} (see also \cite{CL2}, \cite{CL3}, \cite{CL4}).

\paragraph{Dimensional reduction.}

We describe a relation between 
the Donaldson--Thomas equations \eqref{DT1}, \eqref{DT2} 
and the complex ASD equations \eqref{casd} 
by dimensional reduction argument.   
This was pointed out by Tian \cite{Tian00}.

Let $Z$ be a compact Calabi--Yau threefold with K\"{a}hler form
$\omega_0$ and holomorphic $(3,0)$-form $\theta_0$, 
and $T^2$ a torus of complex dimension one. 
We consider the direct product of $Z$ and $T^2$, and denote it  by 
$X$, namely, $X := Z \times T^2$. 
We define a K\"{a}hler form $\omega$ and a 
holomorphic $(4,0)$-form on $X$ 
by 
$\omega := \omega_0 + dz \wedge d \bar{z} , \quad \theta := \theta_0
\wedge dz$, 
where $dz$ is the standard flat $(1 ,0)$ form on $T^2$.

Let $E$ be a hermitian vector bundle with structure group 
$SU(r)$ 
over $Z$, and $p : X = Z \times T^2 \to Z$. 
We then consider $T^2$-invariant solutions to the complex ASD 
equations \eqref{casd} on  $p^{*} (E) \to X$. 
Then these solutions satisfy the Donaldson--Thomas equations on $Z$. 
In fact, if we write a connection $A$ on $ X = Z \times T^2$ 
as 
$ A_{X} = A + \phi dz + \bar{\phi} d \bar{z}$, 
where  $A$ is the $Z$-component of the connection $A_X$ 
and $\phi  \in \Gamma ( Z , \mathfrak{su} (E))$, 
then the curvature becomes 
$$F_{A_X} = F_{A} + d_{A} \phi \wedge dz 
+ d_{A} \bar{\phi} \wedge d \bar{z} +[ \phi , \bar{\phi} ] dz \wedge
d\bar{z} . $$
Hence, if we put  
$u := \phi \, \bar{\theta}_0 \in \Omega^{0,3} (Z , \mathfrak{su} (E))$, 
then $A$ and $u$ satisfy
 the Donaldson--Thomas equations, provided that this $A_X$  is a $T^2$-invariant
solution to the complex ASD equations.

\section{Local model for  
the moduli space of Donaldson--Thomas instantons}
\label{sec:KDT}

Let $Z$ be a compact symplectic 6-manifold with symplectic 
form $\omega$, 
$(E,h)$ a hermitian vector bundle over $Z$ with hermitian metric $h$.

We denote by 
${\mathcal A} ( E ) = {\mathcal A} ( E ,h )$ 
the set of all connections of $E$ 
which preserve the hermitian structure of $E$, and 
put $ {\mathcal C} (E ) := 
{\mathcal A} (E ) \times \Omega^{0,3} 
(Z, \mathfrak{u} (E))$. 
We denote by ${\mathcal G} (E) = \mathcal{G} (E, h)$ 
the gauge group, the group of unitary automorphism of $(E,h)$, 
where the action of
the gauge group on $\mathcal{C}(E)$ is defined by 
$g(A,u) = ( A - (d_{A} g) g^{-1},  g^{-1} u g  )$. 
These spaces $\mathcal{C} (E), \, \mathcal{G} (E)$ 
can be seen as Fr\'{e}chet spaces with $C^{\infty}$-norms, 
but we shall use Sobolev completions of them in Section 3.2.

We denote by $\Gamma_{(A,u)}$ the stabilizer at $(A, u) \in
\mathcal{C}(E)$ of the gauge group $\mathcal{G}(E)$, namely, 
$ \Gamma_{(A,u)} := 
\{ g \in \mathcal{G} (E) \, : \, g (A, u) = (A ,u)\} $. 
We call $(A,u) \in \mathcal{C}(E)$ 
{\it irreducible} 
if  $\Gamma_{(A,u)}$
coincides with the centre of the structure group of $E$, and
{\it reducible} otherwise.  
We denote by $\mathcal{C}^{*} (E)$ the set of all irreducible pair $(A,u) \in
\mathcal{C}(E)$. 
Note that the action of $\mathcal{G} (E)$ is not free on 
$\mathcal{C}^{*}(E)$, but the action of 
 $\hat{\mathcal{G}} (E) = \mathcal{G} (E) / U(1)$ is free on $\mathcal{C}^{*}(E)$.

We denote by $ \mathcal{D} (E)$ the set of all D--T 
instantons of $E$, and by $ \mathcal{D}^{*} (E)$ the set of all
 irreducible D--T 
instantons of $E$.  
We call 
$\mathcal{M} (E) 
= \mathcal{D} (E) / \mathcal{G} ( E ) $
the {\it moduli space of the Donaldson--Thomas instantons}.

\subsection{Linearization}

The infinitesimal deformation of a D--T 
instanton $(A, u)$ is described by the following sequence: 
\begin{equation}
\begin{split}
  0 \longrightarrow \Omega^{0} (Z, \mathfrak{u} (E))
 \xrightarrow{\, \, \, \, \, D_{(A,u)} \, \, \, \, \,} 
 \Omega^{1} (Z, &\mathfrak{u} (E)) 
  \oplus A^{0,3} (Z, \mathfrak{u} (E)) \\
 &\xrightarrow{\, \, \, \, \, D_{(A,u)}^{+} \, \, \, \, \, } 
 \Omega^{+} (Z, \mathfrak{u} (E)) \longrightarrow 0  , 
\end{split}
\label{DTcomplex}
\end{equation}
where 
$D_{(A,u)} (s) =
( d_{A} s , [ \tilde{u}, s] ) 
, \,  
\tilde{u} = u + \bar{u}
, \, 
 D_{(A,u)}^{+}  (  \alpha ,\upsilon ) =
 d_{A}^{+} \alpha  +  \Lambda^2 ( [ u, \bar{\upsilon} ] + [ \upsilon , \bar{u}
 ] ) 
+ \bar{D}_{A}^{*} \upsilon $   
for $s \in \Omega^{0}  (Z, \mathfrak{u} (E))$ and 
$(\alpha  ,\upsilon ) \in  \Omega^{1} (Z, \mathfrak{u} (E)) 
\oplus A^{0,3} 
(Z, \mathfrak{u} (E))$.  
If $(A, u)$ is a D--T instanton, then \eqref{DTcomplex} is
a 
complex. 
In fact, $D_{(A,u)}^{+}  D_{(A,u)} = 0$ follows directly 
from the equations \eqref{DT1}, \eqref{DT2}. 
The complex \eqref{DTcomplex} can be seen as ``holding'' of 
the $A^{0,3} (Z , \mathfrak{u}(E))$-term in \eqref{defHE}, 
namely, it is equivalent to consider the following complex
instead of \eqref{DTcomplex}. 
\begin{equation}
\begin{split}
  0 \longrightarrow \Omega^{0} ( X, \mathfrak{u}(E)) 
  &\xrightarrow{d_{A}} 
 \Omega^1 ( X, \mathfrak{u}(E)) \\ 
 &\xrightarrow{d_{A}^{+}} 
 \Omega^{+} (X,  \mathfrak{u}(E)) 
   \xrightarrow{\bar{D}_{A}'} 
A^{0,3} ( X, \mathfrak{u} (E)) \longrightarrow 0. \\ 
\end{split}
\label{alcomp}
\end{equation}
This is the same as that of the Hermitian--Einstein connections in Section 2.2, 
but it still makes sense in the almost complex setting. 
Hence the following just reduces to the case in \eqref{alcomp}, and it
was proved by Reyes Carri\'{o}n \cite{Reyes_Carrion98}.

\begin{proposition}
If $(A,u) \in \mathcal{D} (E)$, then 
the complex \eqref{DTcomplex} is elliptic. 
\end{proposition}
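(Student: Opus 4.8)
The plan is to prove that \eqref{DTcomplex} is elliptic by showing that, for every nonzero real cotangent covector $\xi$, the associated principal-symbol sequence is exact. The key structural observation is that \eqref{DTcomplex} is a \emph{folding} of the Hermitian-Einstein complex \eqref{defHE} in the case $n=3$, so that its ellipticity follows from Kim's theorem \cite{MR888136} together with a short piece of linear algebra.

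First I would note that the zeroth-order, $u$-dependent terms do not affect the symbols: the maps $g\mapsto[u,g]$ and $(\alpha,\upsilon)\mapsto\Lambda^2([u,\bar\upsilon]+[\upsilon,\bar u])$ are algebraic. Hence, writing $\sigma(\,\cdot\,)(\xi)$ for the principal symbol and suppressing the $\text{End}(E)$-factor on which every symbol acts as the identity, we have $\sigma(\mathcal D)(\xi)(g)=(\sigma(D_A)(\xi)g,\,0)$ and $\sigma(\mathcal D^{+})(\xi)(\alpha,\upsilon)=\sigma(D_A^{+})(\xi)\alpha+\sigma(\bar D_A^{*})(\xi)\upsilon$. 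Abbreviate $s_0:=\sigma(D_A)(\xi)$, $s_1:=\sigma(D_A^{+})(\xi)$ and $s_2:=\sigma(\bar D_A')(\xi)$. Since $\bar D_A^{*}$ is the formal adjoint of $\bar D_A'$ (its image lies in $\Omega^{0,2}\subset\Omega^{+}$), its symbol equals $\pm s_2^{*}$, the adjoint linear map, the sign being irrelevant for kernels and images. Now for $n=3$ the complex \eqref{defHE} terminates at $\Omega^{0,3}$ and reads $0\to\Omega^{0}\xrightarrow{D_A}\Omega^{1}\xrightarrow{D_A^{+}}\Omega^{+}\xrightarrow{\bar D_A'}\Omega^{0,3}\to0$; Kim proved it is elliptic, so the fiberwise symbol sequence
\[
0\longrightarrow\Omega^{0}\xrightarrow{\,s_0\,}\Omega^{1}\xrightarrow{\,s_1\,}\Omega^{+}\xrightarrow{\,s_2\,}\Omega^{0,3}\longrightarrow0
\]
is exact for every $\xi\neq0$. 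Concretely this yields: $s_0$ injective, $\ker s_1=\operatorname{im}s_0$, $\ker s_2=\operatorname{im}s_1$, and $s_2$ surjective; in particular $s_2^{*}$ is injective with $\operatorname{im}s_2^{*}=(\ker s_2)^{\perp}$, giving the orthogonal splitting $\Omega^{+}=\operatorname{im}s_1\oplus\operatorname{im}s_2^{*}$.

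It then remains to carry out the folding. Read fiberwise, the symbol sequence of \eqref{DTcomplex} is
\[
0\longrightarrow\Omega^{0}\xrightarrow{\,t_0\,}\Omega^{1}\oplus\Omega^{0,3}\xrightarrow{\,t_1\,}\Omega^{+}\longrightarrow0,\qquad t_0(g)=(s_0g,0),\ \ t_1(\alpha,\upsilon)=s_1\alpha+s_2^{*}\upsilon .
\]
Injectivity of $t_0$ is that of $s_0$. Surjectivity of $t_1$ follows from $\operatorname{im}t_1=\operatorname{im}s_1+\operatorname{im}s_2^{*}=\ker s_2\oplus(\ker s_2)^{\perp}=\Omega^{+}$. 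For exactness in the middle, $t_1(\alpha,\upsilon)=0$ forces $s_1\alpha=-s_2^{*}\upsilon\in\ker s_2\cap(\ker s_2)^{\perp}=0$, so $s_1\alpha=0$ and $s_2^{*}\upsilon=0$; injectivity of $s_2^{*}$ gives $\upsilon=0$, while $\alpha\in\ker s_1=\operatorname{im}s_0$ gives $\alpha=s_0g$, i.e. $(\alpha,\upsilon)=t_0(g)$. The reverse inclusion $\operatorname{im}t_0\subseteq\ker t_1$ is $s_1s_0=0$. Thus the symbol sequence is exact and \eqref{DTcomplex} is elliptic.

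I expect the only genuine subtlety, the rest being formal, to lie in the second step: identifying $\sigma(\bar D_A^{*})$ with the adjoint of the terminal Hermitian-Einstein symbol $s_2$, and recognizing the orthogonal decomposition $\Omega^{+}=\operatorname{im}s_1\oplus\operatorname{im}s_2^{*}$. That decomposition is what simultaneously yields surjectivity of $t_1$ and the forced vanishing $s_1\alpha=0$, $s_2^{*}\upsilon=0$ inside $\ker t_1$; with it in hand, Kim's ellipticity of \eqref{defHE} feeds directly into the conclusion with no additional analytic estimates.
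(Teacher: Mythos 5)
Your proof is correct, but it takes a genuinely different route from the paper's. The paper argues by direct computation: it writes down the symbols $\sigma(\mathcal{D},w)a=(w\wedge a,0)$ and $\sigma(\mathcal{D}^{+},w)(a,v)$, proves exactness at $\Omega^{1}\oplus\Omega^{0,3}$ by solving the kernel equations explicitly (the condition $w^{0,1}\wedge a^{0,1}=0$ forces $a^{0,1}=w^{0,1}b$, hence $a=w\wedge b$, while injectivity of $\Lambda(w^{0,1})$ on $(0,3)$-forms forces $v=0$), and then gets exactness at $\Omega^{+}$ for free from the vanishing of the alternating sum of fiber dimensions $1-(6+2)+7=0$. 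You instead fold Kim's complex \eqref{defHE} in the case $n=3$, the key points being the identification of $\bar{D}_{A}^{*}$ with the adjoint of the terminal differential $\bar{D}_{A}'$ and the splitting $\Omega^{+}=\operatorname{im}s_{1}\oplus\operatorname{im}s_{2}^{*}=\ker s_{2}\oplus(\ker s_{2})^{\perp}$. Your route is shorter once Kim's result is granted, it is the general mechanism for rolling up an even-length elliptic complex into a two-term one (so it would adapt to higher-dimensional analogues), and your orthogonality argument makes rigorous a point the paper treats casually: the paper writes $\sigma(\mathcal{D}^{+},w)$ as an ordered pair, silently separating the two contributions to $\Omega^{+}$ even though both $P^{+}(w\wedge a)$ and $\Lambda(w^{0,1})v$ have $(0,2)$-components; the relation $\operatorname{im}s_{1}=\ker s_{2}\perp\operatorname{im}s_{2}^{*}$ is exactly what legitimizes that separation. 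What the paper's route buys is self-containedness: it needs no citation and replaces your middle-exactness analysis with a dimension count.

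One caveat you should make explicit. The connection $A$ of a Donaldson-Thomas instanton is in general \emph{not} Hermitian-Einstein: by the Bianchi identity it satisfies $F_{A}^{0,2}=0$, but $\Lambda F_{A}^{1,1}=\lambda(E)I_{E}$ only modulo the $[u,\bar{u}]$ term. So Kim's theorem does not literally apply to \eqref{defHE} with this $A$; for such a connection \eqref{defHE} need not even be a complex (the compositions of successive maps involve curvature terms that need not vanish). This does not break your argument, because ellipticity is a statement about principal symbols, and the symbols $s_{0},s_{1},s_{2}$ (wedging with $w$, projecting, contracting) are the same for every unitary connection; the symbol-level exactness established in Kim's proof is pointwise linear algebra that never uses the Hermitian-Einstein equations. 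But your phrase ``its ellipticity follows from Kim's theorem'' should be replaced by ``the exactness of the symbol sequence of \eqref{defHE}, which Kim's proof establishes for an arbitrary unitary connection,'' since that is the statement your folding argument actually consumes.
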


We denote by $H^{i}_{(A,u)} = H^{i}_{(A,u)} (Z, \mathfrak{u} (E))$ the 
$i$-th cohomology of the complex \eqref{DTcomplex} for $i = 0,1,2$.

The complex \eqref{alcomp} has the associated Dolbeault complex as Kim
\cite{MR888136} described it in the K\"{a}hler case 
(see also \cite[Chap.~V$\!$I$\!$I \S2]{MR909698}):  
\begin{equation}
\begin{CD}
0 @>>> \Omega^0 @>d_{A}>> \Omega^1 @>d_{A}^{+}>>\Omega^{+} @>\bar{D}_{A}'>>  
A^{0,3} @>\bar{D}_{A}>>  0 \\ 
@. @VVj_0V @VVj_1V @VVj_2V @VVj_3V  \\ 
0 @>>> \Omega^{0,0} @>\bar{\partial}_{A}>> \Omega^{0,1} 
 @>\bar{\partial}_{A}>>  
\Omega^{0,2}  
@>\bar{\partial}_{A}>> \Omega^{0,3} @>\bar{\partial}_{A}>>  0 , 
\end{CD} 
\label{comm}
\end{equation}
where $j_{0}$ is injective, $j_1$ is bijective, $j_2$ is surjective with  
the kernel $\{ \beta \omega \, : \,  \beta \in \Omega^{0} \}$, and 
$j_{3}$ is bijective.  
Hence the index of the complex \eqref{alcomp}, 
thus that of the complex \eqref{DTcomplex}, can be expressed by that of the
Dolbeault complex above, which is given by 
$
 \int_{Z} \hat{A} (Z) \wedge ch (K_{Z}^{\frac{1}{2}}) 
\wedge ch( \mathfrak{u} (E))  
$ (See \cite[\S3.5]{G}). 
In the K\"{a}hler case,  the index can be computed as 
$$   \int_{Z} c_{1} ( Z 
)\wedge 
\left(  \frac{r -1 }{2} c_{1} (E)^2 -  r c_{2} (E) \right) 
 + r^2 \sum_{i =0}^{3} (-1)^{i} \dim H^{0,i} (Z). $$ 
Note that the index is zero if $Z$ is a Calabi--Yau threefold.

\subsection{Kuranishi model and the local description of the moduli space}

We denote by $\mathcal{C}_{k} (E) ,  
\mathcal{C}^{*}_{k}(E) , \mathcal{D}_{k} (E) , 
\mathcal{D}_{k}^{*} (E)$ 
the 
 $L^{2}_{k}$-completions of $\mathcal{C} (E)$, $  
\mathcal{C}^{*} (E)$, $\mathcal{D} (E)$, $\mathcal{D}^{*}(E)$
respectively, and  
by $\mathcal{G}_{k+1} (E)$ 
the $L^{2}_{k+1}$-completion of $\mathcal{G} (E)$. 
We take $k$ sufficiently large so that $\mathcal{G}_{k+1}$ becomes a Hilbert Lie group
acting smoothly on $\mathcal{C}_{k} (E)$, 
the quotient topology 
$\mathcal{C}_{k} (E) / \mathcal{G}_{k+1} (E)$ becomes Hausdorff 
 (see e.g. \cite[\S 3]{FrUh91}), 
and to use implicit function theorems for the Sobolev spaces. 
A general reference for the Sobolev spaces and the implicit function
theorems on them for our purpose is, for example, \cite{W}.

\paragraph{Slice.}

We define  {\it slice} $S_{(A,u), \varepsilon}$  
at $(A,u)$ in $\mathcal{C}_{k} (E)$ by 
\begin{equation*}
\begin{split}
&S_{(A,u) , \varepsilon} \\
&:= \{ (\alpha , \upsilon ) 
\in L^{2}_{k} \left( \mathfrak{u} (E) \otimes (\Lambda^1 
\oplus A^{0,3})  \right) :  D_{(A,u)}^{*} (\alpha , 
\upsilon) =0 , \, || (\alpha , \upsilon) ||_{L^{2}_{k}} \leq \varepsilon
\}. 
\end{split}
\end{equation*} 
This set $S_{(A,u) , \varepsilon}$ is transverse to the
$\mathcal{G}_{k+1}$-orbit through $(A,u)$ as $\ker D_{(A,u)}^{*}$ is
orthogonal to $\text{Im}\,  D_{(A,u)}$ with respect to the $L^2$-norm in 
$L^{2}_{k} \left( \mathfrak{u} (E) \otimes (\Lambda^1 
\oplus A^{0,3})  \right)$. 
There is a natural map 
$ P_{(A, u) , \varepsilon}  
: S_{(A,u), \varepsilon} 
\to \mathcal{C}_{k}(E) /
\mathcal{G}_{k+1} (E) $ defined by 
$(\alpha, \upsilon) \mapsto [(A + \alpha ,
u + \upsilon')] $, where $\upsilon' = j_{3} (\upsilon)$, and 
$j_3 : A^{0,3} \to \Omega^{0,3}$ is the map in \eqref{comm}.

In the following, we take $(A,u) \in \mathcal{C}^{*}_{k} (E)$ for
simplicity.

\begin{proposition}
Let $(A,u) \in \mathcal{C}^{*}_{k}(E)$. 
Then there exists $\varepsilon >0$ such that 
$S_{(A,u) , \varepsilon}$ is diffeomorphic to 
$ P_{(A, u) , \varepsilon}  
\left( S_{(A,u) , \varepsilon} \right)$ in 
$\mathcal{C}^{*}_{k} (E)/ \hat{\mathcal{G}}_{k+1} (E)$. 
\end{proposition}

\begin{proof}
This is a familiar claim in gauge theory, 
the proof is a modification of known results for the ASD and the 
 Hermitian--Einstein connections (cf. \cite[Th.~6]{D},  
\cite[Th.~3.2, Th.~4.4]{FrUh91}, 
\cite[Chap.~V$\!$I$\!$I \S4 Th.~4.16]{MR909698}, 
and \cite[Prop.~4.2.1]{LT}). 
We divide the proof into two steps:

\vspace{0.1cm}
\underline{Step 1.} 
\hspace{0.1cm} We consider a map 
$ f_{(A,u)} : 
S_{(A,u) ,\varepsilon} \times 
\hat{\mathcal{G}}_{k+1} (E) \to \mathcal{C}_{k}^{*} (E) $ 
defined by $f_{(A,u)} ( ( \alpha , \upsilon ) ,g) 
= g (A + \alpha , u + \upsilon')$. 
Then the differential of $f_{(A,u)}$ at $( (0,0) , id )$ is given by  
$ D f_{(A,u)} |_{((0,0), id)} ( (\beta , \varphi) , s ) 
= ( \beta , \varphi ) + D_{(A,u)} (s)$.  
As $\text{Im}\, D_{(A,u)}$ and $\ker D_{(A,u)}^{*}$ are
 $L^2$-orthogonal in $L^{2}_{k} \left( \mathfrak{u} (E) \otimes (\Lambda^1 
\oplus A^{0,3})  \right)$, 
$ D f_{(A,u)} |_{((0,0), id)}$ is injective if $(A,u)$ is
 irreducible.

On the other hand, associated to 
the operator 
$$D_{(A,u)}^{*} D_{(A,u)} : 
L^{2}_{k+1} 
( \mathfrak{u} (E) \otimes \Lambda^{0}) / \mathfrak{u} (1) \to 
 L^{2}_{k-1} 
( \mathfrak{u} (E) \otimes \Lambda^{0}) /  \mathfrak{u} (1), $$ 
where $L^{2}_{k+1} 
( \mathfrak{u} (E) \otimes \Lambda^{0}) / \mathfrak{u} (1) 
= \{ s \in L^{2}_{k+1} 
( \mathfrak{u} (E) \otimes \Lambda^{0}) \, : \, 
\int_{Z} \text{tr}\, (s) \, \text{vol}_{g} = 0 \}$,  
there exist the Green operator 
$G^0 : L^{2}_{k} ( \mathfrak{u}(E) \otimes \Lambda^{0}) 
 / \mathfrak{u} (1) 
\to L^{2}_{k} ( \mathfrak{u}(E) \otimes \Lambda^{0}) 
 / \mathfrak{u} (1) $ and 
the harmonic projection 
$H^{0}: L^{2}_{k} ( \mathfrak{u}(E) \otimes \Lambda^{0}) 
/ \mathfrak{u} (1)  
\to L^{2}_{k} ( \mathfrak{u}(E) \otimes \Lambda^{0}) 
/ \mathfrak{u} (1) $ 
with the identity: 
$$ Id = H^{0}  + D_{(A,u)}^{*} D_{(A,u)}  \circ G^{0} $$ 
(see e.g. \cite[Chap.~I$\!$V \S5]{W}).
From the identity, 
we obtain 
$D_{(A,u)}^{*} ( (\gamma , \chi ) - D_{(A,u)} G^{0} D_{(A,u)}^{*} 
(\gamma , \chi ) ) = 0$ 
for any $(\gamma , \chi ) \in L^{2}_{k} 
(\mathfrak{u} (E) \otimes (\Lambda^{1} \oplus A^{0,3}))$. 
Thus, for a given $(\gamma , \chi ) \in L^{2}_{k} 
(\mathfrak{u} (E) \otimes (\Lambda^{1} \oplus A^{0,3}))$, 
we take $(\beta, \varphi) 
= (\gamma ,\chi) -  D_{(A,u)} G^{0} D_{(A,u)}^{*} 
(\gamma , \chi ), \, s = G^{0} D_{(A,u)}^{*} 
(\gamma , \chi )$ to get $(\gamma , \chi) 
= (\beta , \varphi ) + D_{(A,u)} (s)$. 
Therefore $D f_{(A,u)} |_{((0,0), id)}$ is surjective.

We then use an inverse mapping theorem 
for the Hilbert spaces (see e.g. \cite[Chap.~6]{Lang}) 
to deduce that around $(A,u)$,   
$\mathcal{C}_{k}^{*} (E)$ is locally diffeomorphic to 
a neighbourhood of $((A ,u) , id )$ in $S_{(A,u), \varepsilon} \times 
\hat{\mathcal{G}}_{k+1} (E)$.

\vspace{0.1cm}
\underline{Step 2.} 
\hspace{0.1cm} 
We then prove that if for $(\alpha_1 , \upsilon_1) , (\alpha_2 , \upsilon_2) 
\in S_{(A,u) , \varepsilon}$ there exists 
$g \in \mathcal{G}_{k+1} (E)$ such that 
\begin{equation}
 (A + \alpha_1 , \tilde{u} + \upsilon_1) 
= g (A + \alpha_2 , \tilde{u} + \upsilon_2),  
\label{ga}
\end{equation}
then $c g$ is close to $id_{E}$ in $L^{2}_{k+1}$ 
for some $c \in U(1)$.

Since we assume that $(A,u)$ is irreducible, we can take 
$c \in U(1)$ so that 
$g' = cg -id_{E} \in \ker \left( D_{(A,u)}\right)^{\perp}$.  
From \eqref{ga}, we get 
$d_{A} g' = \alpha_1 g' - g' \alpha_2 + (\alpha_1 -\alpha_2) ,  
\, [\tilde{u} , g'] = g' \upsilon_1 - \upsilon_2 g' + \upsilon_1 - \upsilon_2 $. 
Hence, 
\begin{equation} 
D_{(A,u)} g' = 
(\alpha_1 g' - g' \alpha_2  + \alpha_{12} , 
g' \upsilon_1 - \upsilon_2 g' + \upsilon_{12})
 , 
\label{dgp}
\end{equation}
where $\alpha_{12} = \alpha_1 -\alpha _2 , \upsilon_{12} = \upsilon_1 - \upsilon_2$.

Since $g'$ lies in $\left( 
\ker D_{(A,u)} \right)^{\perp}$, 
there exists a constant $C >0$ independent of $(A ,u)$ and $g'$
 such that 
$ 
|| g' ||_{L^{2}_{k+1}} 
\leq C || D_{(A,u)} g' ||_{L^{2}_{k}}
$. 
Thus, using \eqref{dgp}, we obtain 
$$ || g' ||_{L^{2}_{k+1}} 
\leq C \left( || g' ||_{L^{2}_{k}} 
 \left( || \alpha_{1} ||_{L^{2}_{k}} 
+ || \alpha_{2} ||_{L^{2}_{k}}  
+ || \upsilon_{2} ||_{L^{2}_{k}} 
\right) 
 + || \alpha_{12} ||_{L^{2}_{k}}  
 + || \upsilon_{12}  ||_{L^{2}_{k}} \right) . $$
Hence, 
$$ || g' ||_{L^{2}_{k+1}} 
\leq \frac{C}
{1 - 3 \varepsilon C} 
\left( || \alpha_{12} ||_{L^{2}_{k}}  
+ || \upsilon_{12} ||_{L^{2}_{k}} \right) $$
for $\varepsilon < 1 /3 C$. 
Thus, we get  
$|| c g - id_{E} ||_{L^{2}_{k+1}} < C' \varepsilon$ for $\varepsilon$
 small, 
where $C'$ is a positive
 constant.

From this, the assertion of the lemma is reduced to Step 1.

\end{proof}

\begin{remark}
By modifying the proof of Lemma 3.2,  
one can prove that for $(A,u) \in \mathcal{C}_{k} (E)$, there exists
 $\varepsilon >0$ such that $S_{(A,u), \varepsilon} / \hat{\Gamma}_{(A,u)}$ is
 diffeomorphic to $P_{(A,u)} \left( S_{(A,u), \varepsilon} / \hat{\Gamma}_{(A,u)} 
\right)$ in $\mathcal{C}_{k} (E) / 
\hat{\mathcal{G}}_{k+1} (E)$, 
where $\hat{\Gamma}_{(A,u)} = \Gamma_{(A,u)}/ U(1)$, 
following, for example, \cite[Th.~4.4]{FrUh91}.  
\label{resl}
\end{remark}

\paragraph{Kuranishi model.}

This is also a familiar picture in gauge theory. 
We describe it for the Donaldson--Thomas instanton case, 
modifying known results in the ASD and Hermitian--Einstein 
connections (cf. \cite[Prop.~8]{D},  
\cite[Chap.V$\!$I$\!$I \S 4 Th.~4.20]{MR909698}, 
and \cite[Prop.~4.5.3]{LT}). 
We take $(A,u) \in \mathcal{D}_{k} (E)$, and consider a deformation 
$(A + \alpha  , u + \upsilon') \in \mathcal{D}_{k} (E)$, 
where $ (\alpha , \upsilon) \in  L^{2}_{k} (\mathfrak{u} (E) 
\otimes (\Lambda^{1} 
\oplus  A^{0,3} ))$. 
Then, $(\alpha , \upsilon)$ satisfies the following:  
\begin{gather}
 d_{A}^{+} \alpha  + \pi^{+} 
(\alpha \wedge \alpha )
+ B_{u} ( \upsilon )  
+ \Lambda^2 [ \upsilon , \bar{\upsilon} ] +  
\bar{D}_{A}^{*} \upsilon 
+ \bar{*} \alpha \bar{*} \upsilon = 0 , 
\label{deform_a} 
\end{gather}
where $ B_{u} ( \upsilon ) := \Lambda^2 (  
 [ u  , \bar{\upsilon}] + [ \upsilon , \bar{u} ] )$.

Associated to the operator $$ D_{(A,u)}^{+} (D_{(A,u)}^{+})^{*} 
: L^{2}_{k} ( \mathfrak{u}(E) \otimes \Lambda^{+}) 
\to L^{2}_{k} ( \mathfrak{u}(E) \otimes \Lambda^{+}), $$
there exist the Green operator 
$G^2 : L^{2}_{k} ( \mathfrak{u}(E) \otimes \Lambda^{+}) 
\to L^{2}_{k} ( \mathfrak{u}(E) \otimes \Lambda^{+})$ and 
the harmonic projection 
$H : L^{2}_{k} ( \mathfrak{u}(E) \otimes \Lambda^{+}) 
\to L^{2}_{k} ( \mathfrak{u}(E) \otimes \Lambda^{+})$ 
with the identity: 
$$Id = H  + D_{(A,u)}^{+} (D_{(A,u)}^{+})^{*} \circ G^{2} $$
(see e.g. \cite[Chap.I$\!$V \S 5]{W}).
 Using these, we define a map 
$$K_{(A,u)} :  
L^{2}_{k} ( \mathfrak{u}(E) \otimes( \Lambda^{1} 
\oplus A^{0,3} ))
\to 
L^{2}_{k} ( \mathfrak{u}(E) \otimes( \Lambda^{1} 
\oplus A^{0,3} ))$$
by 
$K_{(A,u)} 
( \alpha ,\upsilon ) := 
(\alpha +  (d_{A}^{+})^{*} \circ G^{2} \circ 
( \pi^{+} ( \alpha \wedge \alpha)   + \Lambda^2 [\upsilon , \bar{\upsilon}] 
+ \bar{*} \alpha \bar{*} \upsilon )  , \,  
 \upsilon + ( \bar{D}_{A}' + (B_{u}^{*})' ) \circ G^{2} \circ 
( \pi^{+} ( \alpha \wedge \alpha)  + \Lambda^2 [ \upsilon ,
 \bar{\upsilon} ] 
+ \bar{*} \alpha \bar{*} \upsilon ) 
)$, 
where 
$( B_{u}^{*})' = B_{u}^{*} \circ \pi^{\omega}$, 
$B_{u}^{*} : \Omega^{0}\omega \to A^{0,3}$ is the adjoint of $B_{u}$,
and 
$\pi^{\omega}$ is the orthogonal projection from $\Omega^2$ to $\Omega^0
\omega$.

\begin{lemma}
A pair $(\alpha , \upsilon) \in 
L^{2}_{k} (\mathfrak{u} (E) \otimes (\Lambda^{1} 
\oplus  A^{0,3} ))$ satisfies \eqref{deform_a} 
if and only if it satisfies $
D_{(A,u)}^{+} K_{(A,u)}( \alpha , \upsilon ) = 0 $ 
and $
 H  ( \pi^{+} ( \alpha \wedge \alpha ) + \Lambda^2 
[\upsilon ,\bar{\upsilon}] + \bar{*} \alpha \bar{*}\upsilon )
  =0$. 
\label{lem}
\end{lemma}

\begin{proof}
Using the identity $Id = H + D_{(A,u)}^{+} (D_{(A,u)}^{+})^{*} 
\circ G^{2}$, we rewrite 
the left-hand side of \eqref{deform_a} as  
\begin{equation}
\begin{split}
& d_{A}^{+} ( \alpha +  ( d_{A}^{+})^{*}  \circ G^{2} \circ 
( \pi^{+} ( \alpha \wedge \alpha ) + \Lambda^2 [\upsilon ,
 \bar{\upsilon}] 
+ \bar{*} \alpha \bar{*} \upsilon  ) )  
+ B_{u} ( \upsilon )  
\\
 & \quad  +  \bar{D}_{A}^{*} \left( \upsilon + 
( \bar{D}_{A}' + (B_{u}^{*})' )   \circ G^{2} \circ 
( \pi^{+} ( \alpha \wedge \alpha )  + \Lambda^2 [ \upsilon ,
 \bar{\upsilon} ] + \bar{*} \alpha \bar{*} \upsilon  
)) \right)
 \\
 &  \quad \quad 
 + H \circ ( \pi^{+} ( \alpha \wedge \alpha ) +  \Lambda^2 [\upsilon ,
 \bar{\upsilon} ] 
+ \bar{*} \alpha \bar{*} \upsilon  ))  \\
&  \quad  \quad \quad =  
D_{(A,u)}^{+} K_{(A,u)} + 
H \circ ( \pi^{+} ( \alpha \wedge \alpha ) +  \Lambda^2 [\upsilon ,
 \bar{\upsilon} ] 
+ \bar{*} \alpha \bar{*}  \upsilon  )) . 
\end{split}
\label{dcp}
\end{equation} 
Hence, if $D_{(A,u)}^{+} K_{(A,u)} =0$ and $H \circ ( \pi^{+} ( \alpha \wedge \alpha ) +  \Lambda^2 [\upsilon ,
 \bar{\upsilon} ] 
+ \bar{*} \alpha \bar{*} \upsilon  )) =0$, 
then \eqref{deform_a} holds.

Conversely, if \eqref{deform_a} holds, then from \eqref{dcp} we get 
$$D_{(A,u)}^{+} K_{(A,u)} + 
H \circ ( \pi^{+} ( \alpha \wedge \alpha ) +  \Lambda^2 [\upsilon ,
 \bar{\upsilon} ] 
+ \bar{*} \alpha \bar{*} \upsilon  )) =0 . $$ 
Thus, $( D_{(A,u)}^{+})^{*} D_{(A,u)}^{+} K_{(A,u)} =0 $. 
This implies 
$|| D_{(A,u)}^{+} K_{(A,u)} ||_{L^{2}_{k-1} (\mathfrak{u} (E) 
\otimes \Lambda^{+})} = 0$, 
hence, 
$D_{(A,u)}^{+} K_{(A,u)} = 0$ and $H \circ ( \pi^{+} ( \alpha \wedge \alpha ) +  \Lambda^2 [\upsilon ,
 \bar{\upsilon} ] 
+ \bar{*} \alpha \bar{*} \upsilon  )) =0$. 
\end{proof}

We put $S_{(A,u), \varepsilon}^{d} 
:= \{ ( \alpha , \upsilon ) \in S_{(A,u) , \varepsilon} \, 
 : \, (\alpha , \upsilon ) \text{ satisfies \eqref{deform_a}} \}$, and 
denote by 
$\bold{H}^{i}_{(A,u)} (Z , \mathfrak{u}(E)) \, ( i = 0,1,2)$ 
the harmonic spaces of the
complex \eqref{DTcomplex}.

\begin{lemma}
  $$K_{(A,u)} (S_{(A,u) , \varepsilon}^{d}) \subset \bold{H}^{1}_{(A,u)} 
(Z, \mathfrak{u}(E)). $$
\label{sha}
\end{lemma}

\begin{proof}

From the definition of the map $K_{(A,u)}$, 
we have 
\begin{equation*}
\begin{split}
&D_{(A,,u)}^{*} 
K_{(A,,u)} (\alpha , \upsilon) \\
& \qquad = D_{(A,u)}^{*} (\alpha , \upsilon) 
 +  D_{(A,u)}^{*} ( D_{(A,u)}^{+})^{*} 
(G^{2} \circ 
( \pi^{+} ( \alpha \wedge \alpha ) + \Lambda^2 [\upsilon ,
 \bar{\upsilon}] 
+ \bar{*} \alpha \bar{*}  \upsilon  ))) \\
\end{split}
\end{equation*}
for $(\alpha , \upsilon) \in S_{(A,u) , \varepsilon}^{d} 
 $. 
This is equal to $0$, because $D_{(A,u)}^{*} (\alpha , \upsilon) =0$ 
for $(\alpha , \upsilon) \in S_{(A,u) , \varepsilon}^{d}$, and 
$D_{(A,u)}^{*} (D_{(A,u)}^{+})^{*} = 0$ as 
$D_{(A,u)}^{+} D_{(A,u)} =0$. 
From Lemma 3.4, 
we also have $D_{(A,u)}^{+} K_{(A,u)} =0$. Thus Lemma 3.5 holds. 
\end{proof}

From Lemmas 3.4 and 3.5, we deduce the following.  
\begin{lemma}
A pair $(\alpha , \upsilon) \in 
L^{2}_{k} (\mathfrak{u} (E) \otimes (\Lambda^{1} 
\oplus  A^{0,3} ))$ lies in $S_{(A,u) , \varepsilon}^{d}$ if and only if 
$K_{(A,u)} (\alpha , \upsilon) \in \bold{H}^{1}_{(A,u)} 
(Z , \mathfrak{u}(E))$ and $H  ( \pi^{+} ( \alpha \wedge \alpha ) + \Lambda^2 [\upsilon 
, \bar{\upsilon}] + \bar{*} \alpha \bar{*} \upsilon )
  =0$. 
\end{lemma}

We now prove the following. 
\begin{theorem}
Let $(A,u) \in \mathcal{D}^{*} (E)$. Then  
there exists a neighbourhood $U$ of $0$ in $\bold{H}^{1}_{(A,u)} 
(Z , \mathfrak{u}(E) )$ 
such that around $[(A ,u)]$ the moduli space $\mathcal{M}^{*} (E) = 
\mathcal{D}^{*} (E) / \hat{\mathcal{G}} (E)$ is locally
 modeled on the zero set of a real analytic 
map 
$\kappa_{(A,u)} : U \to \bold{H}^{2}_{(A,u)} (Z ,\mathfrak{u}(E)) $ 
with $\kappa_{(A,u)} (0) =0$, and 
the first derivative of $\kappa_{(A ,u)}$ at $0$ also vanishes. 
\label{kur}
\end{theorem}

\begin{proof}
From the definition of 
the map $K_{(A,u)}$,  we have $K_{(A,u)} (0) = 0$.  
Since the differential of $K_{(A,u)}$ at $0$ is identity, 
we can deduce,  
from the inverse mapping theorem on the 
Hilbert spaces (see e.g. \cite[Chap.~6]{Lang}), 
that there exist
a neighbourhood $U$ of $0$ in $\bold{H}^{1}_{(A,u)} (Z ,\mathfrak{u}(E))$ 
and a map 
$K^{-1}_{(A,u)} : U \to L^{2}_{k} ( \mathfrak{u} (E) \otimes 
(\Lambda^{1} \oplus A^{0,3} ))$
such that 
$K^{-1}_{(A,u)}$ is a diffeomorphism between $U$ and $K^{-1}_{(A,u)} (U)$.  
We then define a map $\kappa_{(A,u)} : U \to \bold{H}^{2}_{(A,u)}$ by 
$\kappa_{(A,u)} = \psi \circ K_{(A,u)}^{-1}$, where 
$\psi : \bold{H}^{1}_{(A,u)} 
\to \bold{H}^{2}_{(A,u)}$ is defined by 
$\psi (\alpha , \upsilon) = H  
( \pi^{+} ( \alpha \wedge \alpha ) + \Lambda^2 [\upsilon 
, \bar{\upsilon}] 
+ \bar{*} \alpha \bar{*} \upsilon )$.

We now take $\varepsilon$ sufficiently small so that all the following
 hold.  
Firstly, from Lemma 3.6, 
the zero set of $\kappa_{(A,u)}$  is mapped by 
$K_{(A,u)}^{-1}$  diffeomorphically to an open subset in  
$S_{(A,u) , \varepsilon}^{d}$.  
Next, from Proposition 3.2, $S_{(A,u), \varepsilon}^{d}$ is diffeomorphic to 
$p_{(A,u), \varepsilon} ( S_{(A,u) , \varepsilon}^{d} )$ 
in $\mathcal{D}^{*}_{k} (E) / \hat{\mathcal{G}}_{k+1} (E)$. 
Hence,  
the zero set of $\kappa_{(A,u)}$ 
is diffeomorphic to a neighbourhood of
 $[(A,u)]$ in $\mathcal{D}^{*}_{k} (E) 
/ \hat{\mathcal{G}}_{k+1} (E)$. 
Moreover, from the elliptic regularity, the harmonic elements are actually 
smooth, 
therefore the neighbourhood of 
$[(A,u)]$ in $\mathcal{D}^{*}_{k} (E) 
/ \hat{\mathcal{G}}_{k+1} (E)$ 
is isomorphic to 
a neighbourhood of $[(A,u)]$ in $\mathcal{M}^{*} (E)$.

The assertions that $\kappa_{(A,u)}=0$ and the derivative of
 $\kappa_{(A,u)}$ at $0$ is zero just follow from the definition 
$\kappa_{(A,u)} = \psi \circ K_{(A,u)}^{-1}$ and the fact that the
 differential of $K_{(A,u)}$ at $0$ is the identity. 
\end{proof}

From Theorem \ref{kur}, one can deduce that 
$\mathcal{M}^{*}(E)$ is smooth around $[(A,u)]$ if 
$\bold{H}^{2}_{(A,u)} (Z ,\mathfrak{u}(E)) = 0$. 
But, as in the case of 
the Hermitian--Einstein connections (cf. \cite{MR888136}, 
\cite[Chap.~V$\!$I$\!$I \S 4]{MR909698}, 
\cite[Chap.~2 \S2.1]{itonak}, \cite[Chap.~4 \S4.5]{LT}), 
it can be improved in the following way.   
Firstly, we note that, 
corresponding to the decomposition of $\mathfrak{u}(r)
 = i \R \oplus \mathfrak{su} (r )$, 
the bundle $\mathfrak{u} (E)$ naturally decomposes into 
$\underline{\R}$ and $\mathfrak{u} (E)_{0}$ over $Z$,  
where $\mathfrak{u} (E)_{0}$ is the bundle of trace-free skew-Hermitian
endmorphisms of $E$, 
and there is a subcomplex of the complex \eqref{DTcomplex}, 
which is defined by
using the bundle $\mathfrak{u}(E)_{0}$ instead of $\mathfrak{u} (E)$. 
The decomposition is preserved by the operators of the complex, 
hence it induces a corresponding splitting of  
$H^{i}_{(A,u)} (Z, \mathfrak{u}(E)) \, (i = 0, 1, 2)$. 
For $(\alpha_c ,\upsilon_c) \in \Lambda^{1}(Z) \oplus A^{0,3} (Z)$, it
is always 
$H  
( \pi^{+} ( \alpha_c \wedge \alpha_c ) + \Lambda^2 [\upsilon_c 
, \bar{\upsilon}_c] 
+ \bar{*} \alpha_c \bar{*} \upsilon_c )=0 $, hence the
map $\kappa_{(A,u)}$ values in $H^{2} (Z , \mathfrak{u}(E)_{0})$.  
In particular, we obtain the following.

\begin{Corollary}
Around $[(A,u)] \in \mathcal{M}^{*} (E)$ with
 $H^{2}_{(A,u)} ( Z , \mathfrak{u}(E)_{0} )=0$ 
, the moduli space $\mathcal{M}^{*} (E)$ is smooth. 
\end{Corollary}

\begin{remark}
Around $(A,u) \in \mathcal{D}(E)$,  which is not irreducible, 
one can prove that 
$\bold{H}^{1}_{(A,u)} (Z, \mathfrak{u} (E))$ and 
$\bold{H}^{2}_{(A,u)} (Z, \mathfrak{u} (E))$ are 
 $\Gamma_{(A,u)}$-invariant, and the map  
$\kappa_{(A,u)}$ is $\Gamma_{(A,u)}$-equivariant. 
Hence, combining the claim in Remark \ref{resl}, one can deduce that 
around $[(A,u)]$ the moduli space $\mathcal{M} (E)$ is locally modeled on 
$\kappa_{(A,u)}^{-1} (0) / \Gamma_{(A,u)}$. 
\end{remark}

\section{The Hitchin--Kobayashi correspondence for the Donaldson--Thomas
 instantons on compact K\"{a}hler threefolds}
\label{sec:HKDT}

Perhaps one might ask what kind of a  
Hitchin--Kobayashi style correspondence would hold for the
Donaldson--Thomas instanton on compact K\"{a}hler threefolds. 
In this section, we describe this, 
which actually follows from a result by 
\'{A}lvarez-C\'{o}nsul and Garc\'{\i}a-Prada \cite{AG}.

Let $Z$ be a compact K\"{a}hler threefold, and $E = (E, h)$ a Hermitian
vector bundle over $Z$ with Hermitian metric $h$.  
If $(A,u)$ is a D--T instanton on $E$, 
then 
the connection $A$ defines a holomorphic structure $\bar{\partial}_{A}$
on $E$ as $F_{A}^{0,2} =0$, thus,  
we can think of $E$ as a locally free sheaf $\mathcal{O} (E , \dbar_{A})$. 
In addition,  
the $\text{End} (E)$-valued $(0,3)$-form $u$ is naturally identified
with a section of the bundle  
$\text{End}(E) \otimes K_{Z}^{-1}$, 
so $\bar{*} u$ is a section of the bundle  $\text{End}(E)
\otimes K_{Z}$.  
The equation $\dbar_{A}^{*} u =0$ implies 
$\dbar_{A} \bar{*} u =0$, hence, $\varphi := \bar{*} u$ is a holomorphic section of 
$\text{End} (E) \otimes K_Z$.

We then consider a pair $( \mathcal{E} , \varphi)$
consisting of a torsion-free sheaf $\mathcal{E}$ 
and a holomorphic section $\varphi$ of $\text{End} (\mathcal{E}) \otimes K_{Z}$. 
A subsheaf $\mathcal{F}$ of $\mathcal{E}$ 
is said to be a {\it $\varphi$-invariant} if 
$ \varphi  (\mathcal{F}) \subset \mathcal{F} \otimes K_{Z}$.   
We define a {\it slope} $\mu (\mathcal{F})$ of a coherent subsheaf $\mathcal{F}$ of
$\mathcal{E}$ by 
$ \mu (\mathcal{F}) 
:= \frac{1}{\text{rank} ( \mathcal{F} )} \int_{Z} 
c_1 ( \det \mathcal{F}) \wedge \omega^2 $.

\begin{definition}
A pair $(\mathcal{E},\varphi)$ 
consisting of a torsion-free sheaf $\mathcal{E}$ 
and a holomorphic section $\varphi$ of $\text{End} (\mathcal{E}) \otimes K_{Z}$ 
is called {\it semi-stable} if 
$ \mu ( \mathcal{F} ) \leq \mu ( \mathcal{E} ) $ 
for any 
$\varphi$-invariant coherent subsheaf 
$\mathcal{F}$ with $\text{rank} (\mathcal{F}) 
< \text{rank} (\mathcal{E})$.  
A pair $(\mathcal{E}, \varphi)$ is called {\it stable} if 
$ \mu ( \mathcal{F} ) < \mu ( \mathcal{E} ) $ 
for any 
$\varphi$-invariant coherent subsheaf 
$\mathcal{F}$ with $\text{rank} (\mathcal{F}) 
< \text{rank} (\mathcal{E})$.  
\label{def:stable}
\end{definition}

\begin{definition}
A pair $(\mathcal{E} , \varphi)$ 
consisting of a torsion-free sheaf $\mathcal{E}$ 
and a holomorphic section $\varphi$ of $\text{End} (\mathcal{E}) \otimes K_{Z}$ 
is said to be {\it poly-stable}
if it is a direct sum of stable sheaves with the same slopes in the sense of Definition
\ref{def:stable}. 
\end{definition}

Then the correspondence can be stated as a one-to-one
correspondence between a pair $(\mathcal{E} , \varphi)$, 
where $\mathcal{E}$ is a locally-free sheaf on a K\"{a}hler threefold
$Z$ and a holomorphic section $\varphi$ of $End(\mathcal{E}) \otimes K_{Z}$, which
is stable in the sense of  Definition \ref{def:stable}; and 
the existence of 
a solution to the Donaldson--Thomas equations on
$\mathcal{E}$. 
This fits into a setting studied by 
\'{A}lvarez-C\'{o}nsul and Garc\'{\i}a-Prada \cite{AG} (see also \cite{BGM}), 
and it is stated as a special case of their results as the case of a
twisted quiver bundle with one vertex and one arrow, whose head and tail
conincide, and with twisting sheaf the anti-canonical bundle. 
We state it in our setting as follows.

\begin{theorem}[\cite{AG}]
Let $Z$ be a compact K\"{a}hler threefold with K\"{a}hler form $\omega$. 
Let $(\mathcal{E} , \varphi)$ be a pair consisting of a locally-free
 sheaf $\mathcal{E}$ on $Z$ and a holomorphic section $\varphi$ of $\text{End}\, (\mathcal{E})
 \otimes K_{Z}$. 
Then, $(\mathcal{E} , \varphi)$ is poly-stable if and only if 
$\mathcal{E}$ admits a unique Hermitian metric $h$ satisfying 
$\Lambda F_{h} + \Lambda^3 [ \varphi , \bar{\varphi}^{h}] + 6 \pi i  
 \mu (\mathcal{E})  Id_{\mathcal{E}}  = 0 $, where $F_{h}$ is the curvature form of $h$, and
 $\Lambda := ( \wedge \omega )^{*}$. 
\label{th:HKDT}
\end{theorem}

Note that the equation $\bar{\partial}_{A}^{*} u =0$ in the Donaldson--Thomas equations on a compact K\"{a}hler threefold is implicitly addressed in Theorem \ref{th:HKDT} by saying that $\varphi = \bar{*} u$ is a holomorphic section of $\text{End} (\mathcal{E} ) \otimes K_{Z}$. 
One more remark is that a proof of the Hitchin--Kobayashi correspondence using the
Mehta--Ramanathan argument for the Vafa--Witten equations in \cite{Tan}
could also apply to the Donaldson--Thomas instanton on smooth projective
threefold as mentined in \cite{Tan}.

%%%%%%%%%%%%%%%%%%%%%%%%%%%%%%%%%%%%%%%%%%%%%%%%%%%%%%%%%%%%%%%%%%%%%%%%%%%%%%%%%%%%%%%%%%%%%%%%%%%%%%

\addcontentsline{toc}{chapter}{Bibliography}

\begin{flushleft}
Graduate School of Mathematics, Nagoya University, 
Furo-cho, Chikusa-ku, Nagoya, 464-8602, Japan \\
yu2tanaka@gmail.com
\end{flushleft}

\end{document}